\numberwithin{equation}{section}
\newtheorem{theorem}{Theorem}[section]
\newtheorem{prop}[theorem]{Proposition}
\newtheorem{lemma}[theorem]{Lemma}
\newtheorem{cor}[theorem]{Corollary}
\theoremstyle{definition}
\newtheorem{definition}[theorem]{Definition}
\newtheorem{example}[theorem]{Example}
\newtheorem{remark}[theorem]{Remark}
\newcommand{\D}{\Delta}
\def\<{{\langle}}
\def\>{{\rangle}}
\def\G{{\Gamma}}
\def\d{{\delta}}
\def\d{{\delta}}
\def\Z{\mathbb Z}
\def\Q{\mathbb Q}
\def\R{\mathbb R}
\def\S{{\mathbb S}}
\def\Co{\mathbb C}
\def\B{{\cal B}}
\def\D{\mathbb D}
\def\s{\sigma}
\def\t{\tau}
\def\w{\omega}
\def\Gr{\Bbb G}
\def\L{{\cal L}}
\def\La{{\Lambda}}
\def\Rd{{\cal R}_d}
\def\De{\Delta}
\def\o{\overline}
\def\ni{\noindent} 
\begin{document}

\title{Links in Surfaces and Laplacian Modules }

\author{Daniel S. Silver 
\and Susan G. Williams}

\maketitle 


\begin{abstract}  Laplacian matrices of weighted graphs in surfaces $S$ are used to define module and polynomial invariants of  $\Z/2$-homologically trivial links  in $S \times [0,1]$. Information about virtual genus is obtained.  \bigskip

MSC: 05C10, 57M25
\end{abstract}

\section{Introduction} \label{Intro} 

The Laplacian matrix of a signed graph is a discrete version of the well-studied Laplacian operator of physics. Its spectrum yields insights about the structure of the graph. (For a survey see \cite{Mo12}). For signed graphs embedded in a closed oriented surface $S$,  information about how the graph winds about $S$ can be used to define a Laplacian matrix $L_G$ with homology coefficients (see, for example, \cite{Ke11}). We introduce the module $\L_G$ presented by $L_G$. Its module order is a polynomial $\De_G$ over the Laurent polynomial ring $\La = \Z[x_1^{\pm1}, y_1^{\pm1}, \ldots, x_g^{\pm1}, y_g^{\pm1}]$, where $g$ is the genus of $S$. The polynomial is the determinant of $L_G$, but we will see that it can also be computed using a simple skein relation.

When the graph $G$ is embedded in $S$ the well-known medial construction associates a checkerboard colored diagram $D \subset S$ for a link $\ell$ in the thickened surface $S \times [0,1]$. 
 In \cite{ILL10} the authors showed that signature, nullity and determinant, classical link invariants, can be defined for $\ell$ using the usual Laplacian matrix, which can be viewed as a generalization of the Goeritz matrix of a link. Here we consider the Laplacian matrix with homological coefficients. We show that the module $\L_G$ it presents is  unaffected by Reidemeister moves. Invariants of links in the thickened surface $S \times [0,1]$ are described. 

Any link $\ell$ in $S \times [0,1]$ can be described by a diagram in $S$. Adding or deleting ``hollow handles" to $S$, avoiding $D$, might produce a surface of smaller genus. The minimum possible genus is the \textit{virtual genus} of $\ell$. In many cases the polynomial $\De_G$ can be used to establish that a surface achieves the minimum possible genus.

\section*{Acknowledgements} The authors are grateful to Seiichi Kamada, Louis H. Kauffman and Christine Ruey Shan Lee for helpful comments.

\section{Laplacian modules and polynomials}

A graph $G$ in a closed, connected oriented surface $S$ with edge and vertex sets $V_G, E_G$, respectively, is \textit{signed} if every edge $e \in E_G$ is labeled with $\s_e = +1$ or $-1$. In order to avoid visual clutter, unlabeled edges will be assumed to have sign $+1$. 

Let $\tilde S$ denote the universal abelian covering space of $S$. We regard the deck transformation group $A(\tilde S)$ multiplicatively. It is isomorphic to $H_1(\tilde S; \Z)$. We fix a symplectic homology basis $\B =
\{ x_1, y_1, \ldots, x_g, y_g \}$, and represent its members by oriented closed curves. Each closed curve in $S$ is labeled by the element of $A(\tilde S)$ that it determines, a monomial in the variables $x_i, y_i$.

Although we work with undirected graphs, it is convenient for the following definition to regard $G$ as directed. There is a standard way to do this: replace each edge $e \in E_G$ (including loops) with a pair of edges joining the same endpoints, each with the same weight as $e$ but with opposite directions.  Any directed path in $S$  determines an element $\phi_P \in A(\tilde S)$, read by traveling along the path in the preferred direction, recording $y_k^{\pm 1}$ (resp. $x_k^{\pm 1}$) as we cross the homology basis  curve that is labeled $x_k$ (resp. $y_k$), the exponent determined as in Figure \ref{path}. In particular, each directed edge $e \in E_G$ determines a monomial $\phi_e$.

\begin{figure}
\begin{center}
\includegraphics[height=1.5 in]{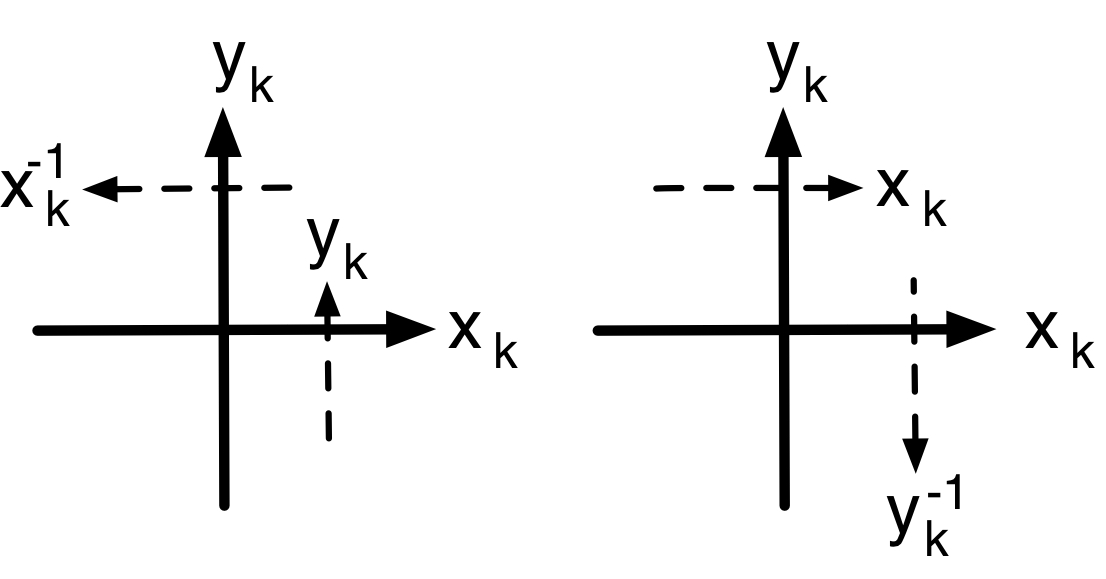}
\caption{Finding the monomial of a path transverse to $G$}
\label{path}
\end{center}
\end{figure}

Assume that $G$ is a signed directed graph with vertex set $V_G= \{v_1, \ldots, v_n\}$. The \textit{adjacency matrix $A_G= (a_{i,j})$ of $G$ (with respect to $\B$)} is the $n \times n$ matrix with non-diagonal entries
$a_{i,j}$ equal to $\sum \s_e \phi_e$, where the summation ranges over all edges from $v_i$ to $v_j$. Diagonal entries of $A_G$ are zero. Denote by $\d_g = (\d_{i,j})$ the $n \times n$ diagonal matrix with $\d_{i, i} = \sum_j a_{i,j}$. 

\begin{definition} The \textit{Laplacian matrix $L_G$ of $G$ (with respect to $\B$)} is $A_G - \d_G$. 
The Laplacian matrix of a signed undirected graph $G$ is the Laplacian matrix of the associated directed graph. \end{definition}

Henceforth the graphs that we consider will be weighted but undirected.

The entries of $L_G$ are elements of the Laurent polynomial ring $\La = \Z[x_1^{\pm1}, y_1^{\pm1}, \ldots, x_g^{\pm1}, y_g^{\pm1}]$. The graph $G$ determines a line bundle with a copy of $\La$ at each vertex. Adopting the terminology of \cite{Ke11}, we call $\phi_e$ the \textit{connection} of the edge $e$.

\begin{definition}\label{lapdef} Let $G$ be a graph in $S$. Its \textit{Laplacian polynomial $\De_G$ (with respect to $\B$)} is the determinant of $L_G$.  \end{definition}

\begin{remark} When $G$ is directed and connections are ignored, the Laplacian matrix $L_G$ is expressible as $F F^t$, where $F$ is the incidence matrix of $G$ and $F^t$ denotes its transpose. This form has had many applications. For example it commonly used in the proof of the Matrix Tree Theorem (see, for example, \cite{Mo12}). In \cite{DKM13} the authors use it to extend the theory of Laplacian matrices and critical groups from graphs to simplicial complexes. There $F$ functions as a boundary operator in a chain complex. 

We can express the Laplacian matrix $L_G$, as defined above, as $F F^t$, but we must define $F$ and $F^*$ suitably.

We begin by directing the edges of the graph $G$ arbitrarily.  As above, $v_1, \ldots, v_n$ are the vertices of $G$. Let  $e_1, \ldots, e_m$ denote the edges. Then $F=(f_{i,j})$ is an $n \times m$ matrix with entries in the ring $\Z[i][x_1^{\pm 1}, y_1^{\pm 1}, \ldots, x_g^{\pm 1}, y_g^{\pm 1}]$.  For each $i, j$, the entry $f_{i,j}$ is equal to $\sqrt{w_j \phi(e_j)}$ if $e_j$ has initial vertex $v_i$; 
$f_{i,j} = -\sqrt{w_j \phi(e_j)}$ if $e_j$ terminates at $v_i$; otherwise, $f_{i,j}=0$. Note that since $w_j = 1, -1$, the value of 
$\sqrt{w_j}$ is either 1 or $i$.  

The transpose matrix $F^*$ is defined as usual but we replace all connections $\phi(e_j)$ with their inverses.

\end{remark}

\begin{example} If $G$ is a 1-vertex graph, then its Laplacian polynomial has the form 
\begin{equation}\label{1vertex}  \sum \s_e (2-\phi_e - \phi_e^{-1}), \end{equation}
where the summation is over the edges (loops) $e \in E_G$.  \end{example} 

\begin{prop} Let $e$ be a non-loop edge of G. Then 
\begin{equation}\label{skein}  \De_G =  \De_{G\setminus e} + \s_e \De_{G/e}. \end{equation} \end{prop}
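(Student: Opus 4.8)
The plan is to isolate the contribution of $e$ to $L_G$ as a rank-one perturbation supported on the two endpoints of $e$, annihilate it with determinant-preserving row and column operations, and recognize what survives as $\De_{G\setminus e}+\s_e\De_{G/e}$. Since $e$ is not a loop, its endpoints are distinct; call them $v_1,v_2$. Reading off the definition of $L_G$ (and in accordance with $(\ref{1vertex})$ in the degenerate case $v_1=v_2$), the edge $e$ affects $L_G$ only within the principal $2\times2$ block indexed by $\{v_1,v_2\}$, where its contribution is $\s_e$ times $\bigl(\begin{smallmatrix}1 & -\phi_e\\ -\phi_e^{-1} & 1\end{smallmatrix}\bigr)$, i.e. the ordinary edge contribution $\s_e\bigl(\begin{smallmatrix}1&-1\\ -1&1\end{smallmatrix}\bigr)$ twisted by the connection; this is a rank-one matrix. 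Hence $L_G=L_{G\setminus e}+\s_e N$, where $N$ is the $n\times n$ matrix with $N_{11}=N_{22}=1$, $N_{12}=-\phi_e$, $N_{21}=-\phi_e^{-1}$, and all other entries $0$.

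Next I would apply to $L_G$ the two determinant-preserving operations ``add $\phi_e$ times row $2$ to row $1$'' and ``add $\phi_e^{-1}$ times column $2$ to column $1$''. A short computation shows these wipe out every entry of $\s_e N$ except the $(2,2)$ entry, which remains $\s_e$. Writing $M$ for the result of performing the same two operations on $L_{G\setminus e}$, we obtain $\De_G=\det(M+\s_e E)$, where $E$ is the matrix unit with a single $1$ in position $(2,2)$. Expanding this determinant linearly along the second column gives $\De_G=\det M+\s_e M'$, where $M'$ is the $(2,2)$-cofactor of $M$. Since row and column additions do not change the determinant, $\det M=\det L_{G\setminus e}=\De_{G\setminus e}$, and it remains only to prove $M'=\De_{G/e}$.

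Unwinding the operations, $M'$ is the determinant of the matrix obtained from $L_{G\setminus e}$ by replacing its first row with $(\text{row }1)+\phi_e(\text{row }2)$, replacing its first column with $(\text{column }1)+\phi_e^{-1}(\text{column }2)$, and then deleting row $2$ and column $2$. The claim is that this matrix is exactly the Laplacian matrix $L_{G/e}$. Contracting the non-loop edge $e$ identifies $v_1$ with $v_2$ into a single vertex $v_\ast$, and one realizes $G/e$ in $S$ by sliding $v_2$ along $e$ onto $v_1$; this transport multiplies the connection of each edge formerly incident to $v_2$ by $\phi_e^{\pm1}$, with sign dictated by orientation. With this convention one verifies the defining formulas for $L_{G/e}$ entry by entry: the $v_\ast$-diagonal entry equals $(L_{G\setminus e})_{11}+\phi_e^{-1}(L_{G\setminus e})_{12}+\phi_e(L_{G\setminus e})_{21}+(L_{G\setminus e})_{22}$ (the old loops at $v_1$ and at $v_2$, the parallel edges to $v_2$ that become loops, and the half-edges at $v_1$ and $v_2$ all being accounted for, using $(\ref{1vertex})$ for the loop terms); the $v_\ast$-to-$v_k$ entry equals $(L_{G\setminus e})_{1k}+\phi_e(L_{G\setminus e})_{2k}$, its mirror equals $(L_{G\setminus e})_{k1}+\phi_e^{-1}(L_{G\setminus e})_{k2}$, and the entries among $v_3,\dots,v_n$ are unchanged. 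These are precisely the entries of the truncated, modified matrix above, so $M'=\det L_{G/e}=\De_{G/e}$, and $\De_G=\De_{G\setminus e}+\s_e\De_{G/e}$.

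The computation that the two operations leave only the $(2,2)$ entry of $N$, and the cofactor expansion, are routine. The main obstacle is the last identification: one must keep careful track of which connections the edges of $G/e$ inherit — especially the monodromy factors $\phi_e^{\pm1}$ coming from transport across $e$ — and note that the resulting ambiguity in how those factors are distributed does not affect the determinant, so that $\De_{G/e}$ is well defined. As an independent check one may instead expand $\De_G=\det L_G$ as a sum over cycle-rooted spanning forests of $G$ (a matrix-tree theorem with connection coefficients) and partition the sum according to whether a forest uses $e$: contracting $e$ in the forests that do preserves edge weights and holonomies around cycles, which gives the identity directly.
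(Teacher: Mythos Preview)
Your proof is correct and takes a genuinely different route from the paper's. The paper invokes Forman's formula (equation~(\ref{formaneq})) as a black box: it writes $\De_G$ as a sum over cycle-rooted spanning forests and simply partitions those forests according to whether they contain $e$, identifying the two pieces with the CRSFs of $G/e$ and $G\setminus e$. This is exactly the argument you mention in your final paragraph as an ``independent check,'' but for the authors it \emph{is} the proof.

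Your primary argument, by contrast, is a direct determinant computation: you isolate the rank-one contribution of $e$, kill it with elementary row/column operations, and then identify the resulting $(2,2)$-cofactor with $L_{G/e}$ via the transport-along-$e$ description of the contracted connections. This is more hands-on and entirely self-contained --- it does not depend on Forman's theorem --- at the cost of the bookkeeping you flag (tracking how $\phi_e^{\pm1}$ decorates the edges formerly incident to $v_2$, and how parallel $v_1$--$v_2$ edges become loops with holonomy $\phi_f\phi_e^{-1}$). Both arguments are short; the paper's is cleaner because Forman has already done the work, while yours would be preferable in a setting where one did not want to import that result.
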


\begin{proof} 

We make use of a result of Forman \cite{Fo93} that expresses $\De_G$ in terms of certain subgraphs of $G$. A \textit{cycle-rooted spanning forest} (CRSF) is a subgraph $F$ containing all the vertices of $G$ and such that each component has a unique cycle. Then

\begin{equation}\label{formaneq} \De_G= \sum_F \prod_{e \in E_F} \s_e \prod_{{\rm cycles\ of\ }F} (2-\phi - \phi^{-1}), \end{equation}
where the summation is over all cycle-rooted spanning forests $F$ of $G$, and $\phi^{\pm 1}$ are the connections of the two orientations of the cycle.

For $e$ a fixed non-loop edge of $G$, we partition the CRSFs of $G$ into those that contain $e$ and those that do not. These are in one-to-one correspondence with the CRSFs of $G/e$ and $G\setminus e$, respectively. In the second case the weight 
$\prod_{e' \in E_F} \s_{e'}$ is unchanged, but in the first case the factor $\s_e$ is lost. \end{proof}

Proposition \ref{skein} enables us to compute $\De_G$ using a skein computation tree. Leaves of the tree are disjoint unions of 1-vertex graphs. The following are easily proved and useful for computation.

\begin{itemize} \item{} If $G$ is the union of two disjoint subgraphs $G_1$ and $G_2$, then  $\De_G = \De_{G_1} \De_{G_2}$. 
\item{} If $G$ has no edges, then $\De_G = 0$.
\item{} If $G$ is a cyclic graph, then $\De_G = \s (2- \phi -\phi^{-1})$, where $\s$ is the product of the signs $\s_e$ of edges of $G$, and $\phi$ is the connection of the cycle (with either orientation).  \end{itemize}

\begin{example} \label{thetaex} Consider the theta-graph $G$ of Figure \ref{theta}, embedded in the torus. (Recall that unlabeled edges have positive sign.) Deleting the middle, vertical edge $e$, produces a 2-vertex cyclic graph $G\setminus e $. Contracting  it results in a 1-vertex graph. By equation (\ref{skein}) we have 
$$\De_G = (2- xy^{-1}-x^{-1}y) + (4-x-x^{-1}-y-y^{-1}) = 6 - x -x^{-1}-y-y^{-1}-xy^{-1}-x^{-1}y.$$
The polynomial can also be computed directly from the Laplacian matrix $L_G$.
\begin{equation}\label{dual} L_G = \begin{pmatrix} 3& -1-x^{-1}-y^{-1} \\ -1 -x -y & 3\\ \end{pmatrix} \end{equation}
\end{example}

\begin{figure}
\begin{center}
\includegraphics[height=1.5 in]{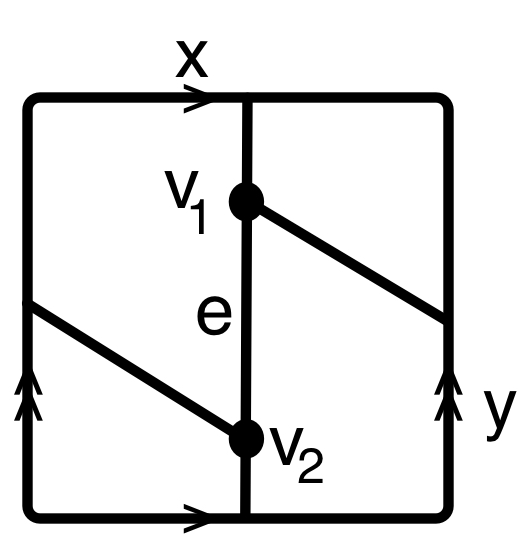}
\caption{Theta-graph $G$ embedded in a torus}
\label{theta}
\end{center}
\end{figure}

The Laplacian matrix $L_G$ determines a homomorphism of the free module $\La^n$ generated by the vertices of $G$. 
The cokernel $\La^n/ \La^n L_G$ is a $\La$-module $\L_G$, the \textit{Laplacian module} of $G$. Its module order   
is the Laplacian polynomial $\De_G$. In this context it is well defined only up to multiplication by units in $\La$. However, Forman's equation (\ref{formaneq}) provides a normal form that is preserved by equation \ref{skein}. (Definition \ref{lapdef} also gives the normalized form since it agrees with Forman's formula.) We will henceforth assume that $\De_G$ is normalized in this way, well defined up to multiplication by $-1$. 

\begin{remark} 
When $G \subset \S^2$, all connections are trivial and the Laplacian module $\L_G$ is a finite abelian group. If furthermore all edge weights of $G$ are $+1$, then $\L_G$ is the direct sum of $\Z$ and the \textit{abelian sandpile group} of $G$. See \cite{CP18} for details. \end{remark}


\section{Applications to links in surfaces}

A \textit {thickened surface} is a product $S \times [0,1]$, where $S$ is a closed, connected oriented surface. For $p \in S$, we think of the point $(p,1)$ as lying above $(p,0)$. 
A \textit{link} $\ell \subset S \times [0,1]$ is a pairwise disjoint collection of finitely many embedded closed curves. It can be described by a \textit{link diagram} $D$: a 4-valent graph $|D|$ embedded in $S$, called a \textit{universe}, with hidden-line effect in a neighborhood of each vertex indicating how one strand of $\ell$ passes over another. 

\begin{figure}
\begin{center}
\includegraphics[height=1.5 in]{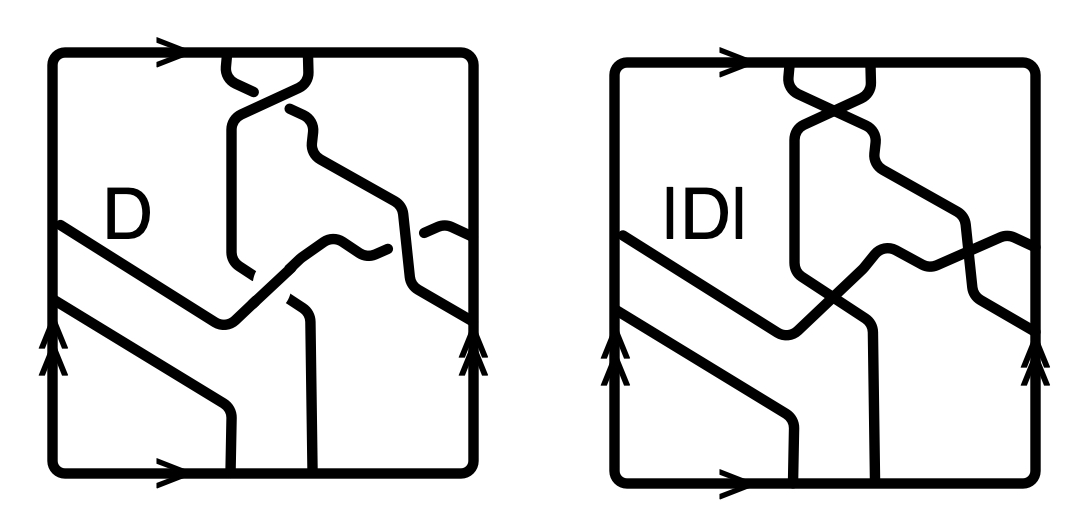}
\caption{Diagram $D$ and universe $|D|$}
\label{diagram}
\end{center}
\end{figure}

\textit{R-equivalence} of link diagrams in $S$ is the equivalence relation generated by Reidemeister moves (See Figure \ref{Rmoves}.) It is well known that Reidemeister's original proof in \cite{Re48} for planar links shows generally that two links in $S \times [0,1]$ are isotopic if and only if their diagrams are R-equivalent. 

A \textit{region} of a link diagram is a connected component of $S \setminus |D|$. A link diagram is \text{cellular} if all of its regions are contractible. We can turn any diagram into a cellular diagram with Reidemeister moves of type II.
Less obvious is the following.

\begin{prop} \label{cellular} If two cellular link diagrams $D, D' \subset S$ are equivalent by Reidemeister moves then they are related by a sequence of cellular diagrams such that each diagram is obtained from the previous one by a single Reidemeister move. \end{prop}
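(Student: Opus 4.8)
The plan is to follow the given sequence of Reidemeister moves and repair the failures of cellularity as they arise, one move at a time, by inserting auxiliary Reidemeister~II moves.

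First I would record two local facts. \emph{(i) Type~I and type~III moves preserve cellularity.} A type~I move only creates or absorbs a monogon, which is carved out of, or merged into, a single adjacent region without altering that region's homeomorphism type; a type~III move is supported in a disk, and a direct inspection of the finitely many regions meeting that disk shows that $S\setminus|D|$ and $S\setminus|D'|$ are homeomorphic by a correspondence taking contractible regions to contractible regions. \emph{(ii) A type~II move applied to a cellular diagram produces a diagram with at most one non-contractible region, and that region, if it occurs, is an annulus.} Deleting a bigon $B$ fuses it with the region $R_1$ across one of its edges and the region $R_2$ across the other: if $R_1\neq R_2$ the union is a disk, while if $R_1=R_2$ it is a disk with a band glued along part of its boundary, i.e.\ an annulus (orientability of $S$ rules out a M\"obius band); adding a bigon is dual. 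Conversely, \emph{any annular region of a diagram is cut into two disks by a single type~II move}: push a short finger of a strand on one boundary circle of the annulus across the annulus and over a strand on the other boundary circle. The finger is an arc joining the two boundary circles of the annulus, so the annulus minus the finger is a disk, and the new bigon is a disk; moreover this repair may be localized inside any prescribed subdisk of the annular region, hence made disjoint from any given finite set of points.

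Now let $D=D_0\to D_1\to\cdots\to D_k=D'$ be a Reidemeister sequence with $D_0$ and $D_k$ cellular. I would build a cellular sequence inductively, carrying along a cellular diagram $\tilde D_i$ equal to $D_i$ together with finitely many ``trivial'' bigons, each sitting inside a disk region. To pass from $\tilde D_{i-1}$ to $\tilde D_i$: by fact~(i), only a type~II reference move $M_i\colon D_{i-1}\to D_i$ can cause trouble, so suppose $M_i$ is of that type. If the support of $M_i$ meets one of the auxiliary bigons, first slide that bigon clear of it through a short chain of cellular diagrams; then perform $M_i$, obtaining $D_i$ with the same auxiliary bigons. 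If the result is cellular, take it as $\tilde D_i$; otherwise, by~(ii), it has exactly one annular region, and a single repairing type~II move, placed away from the auxiliary bigons, cuts that region into disks, giving $\tilde D_i$ with one more auxiliary bigon. Every step here is a single Reidemeister move and every diagram is cellular. Finally $\tilde D_k$ is $D_k$ with finitely many trivial bigons; slide them all into a single disk region of $D_k$ and delete them one at a time by type~II moves, each deletion merging a trivial bigon with two distinct disks and hence leaving the diagram cellular, terminating at $D_k$. Concatenating the three stages yields the required cellular sequence.

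The main obstacle I expect is the bookkeeping hidden in the inductive step: making rigorous the transport of a collection of auxiliary bigons along a Reidemeister sequence --- in particular that an auxiliary bigon can always be slid clear of the next move's support through cellular diagrams, and that repairing bigons introduced at separate stages do not interfere. A secondary point needing care is fact~(i) for type~III moves, i.e.\ that the complement surface is genuinely unchanged up to homeomorphism; to sidestep it one can argue uniformly that the total defect $\sum_R\bigl(1-\chi(R)\bigr)$ of the regions grows by at most one under any single move, so that the only possible failure of cellularity is a lone annular region, repaired exactly as above.
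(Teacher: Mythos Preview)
Your approach is essentially the paper's: insert auxiliary type~II ``fingers'' to protect cellularity, carry them through the Reidemeister sequence, and retract them at the end against the cellular target $D'$. The paper does exactly this, only more tersely; it also asserts (without your explicit check) that only the bigon-removing type~II move can break cellularity.

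There is one genuine slip in your inductive step. You perform $M_i$ first and \emph{then} repair the resulting annulus, so the diagram immediately after $M_i$ need not be cellular---contradicting your claim that ``every diagram is cellular.'' The paper avoids this by adding the finger \emph{before} the bad move, bridging the would-be channel so that the simplifying type~II then merges two distinct disks. Your argument works once you swap the order: detect in advance that the two regions flanking the bigon coincide, cut that region with a repairing type~II placed across the channel, and only then perform $M_i$. (Incidentally, your ``adding a bigon is dual'' in fact never produces an annulus from a cellular diagram: the middle disk is split into the bigon and two disks, so only the bigon-removing direction is ever bad.) The bookkeeping worry you flag---transporting the auxiliary bigons/fingers past later moves---is real and is hand-waved in the paper as well (``the tip of the finger\ldots should follow the diagram as it is deformed''); your honest acknowledgement of it is appropriate.
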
 

\begin{proof} Assume that $D$ and $D'$ are cellular link diagrams that are equivalent by a sequence of Reidemeister moves. 
The only ``bad move" that can destroy the contractibility of a region is the forward direction of a type II move in Figure \ref{Rmoves}.
A bad move opens a channel from a region to itself. 
Before any bad move is performed, we add a thin finger as in Figure \ref{2move}, bridging the channel and thereby protecting the region's contractibility. The tip of the finger is free but it should follow the diagram as it is deformed, staying above the arc below its tip. We perform the sequence Reidemeister moves that brings $D$ to $D'$. The fingers will be stretched, twisted, and might even cross over each other but no arc should pass through a finger's interior.  Finally we retract the fingers by type II moves. Since the $D'$ is cellular, non-contractible regions will not appear during this final phase, a claim that is easy to see by imagining the retractions in reverse.

\begin{figure}
\begin{center}
\includegraphics[height=2.5 in]{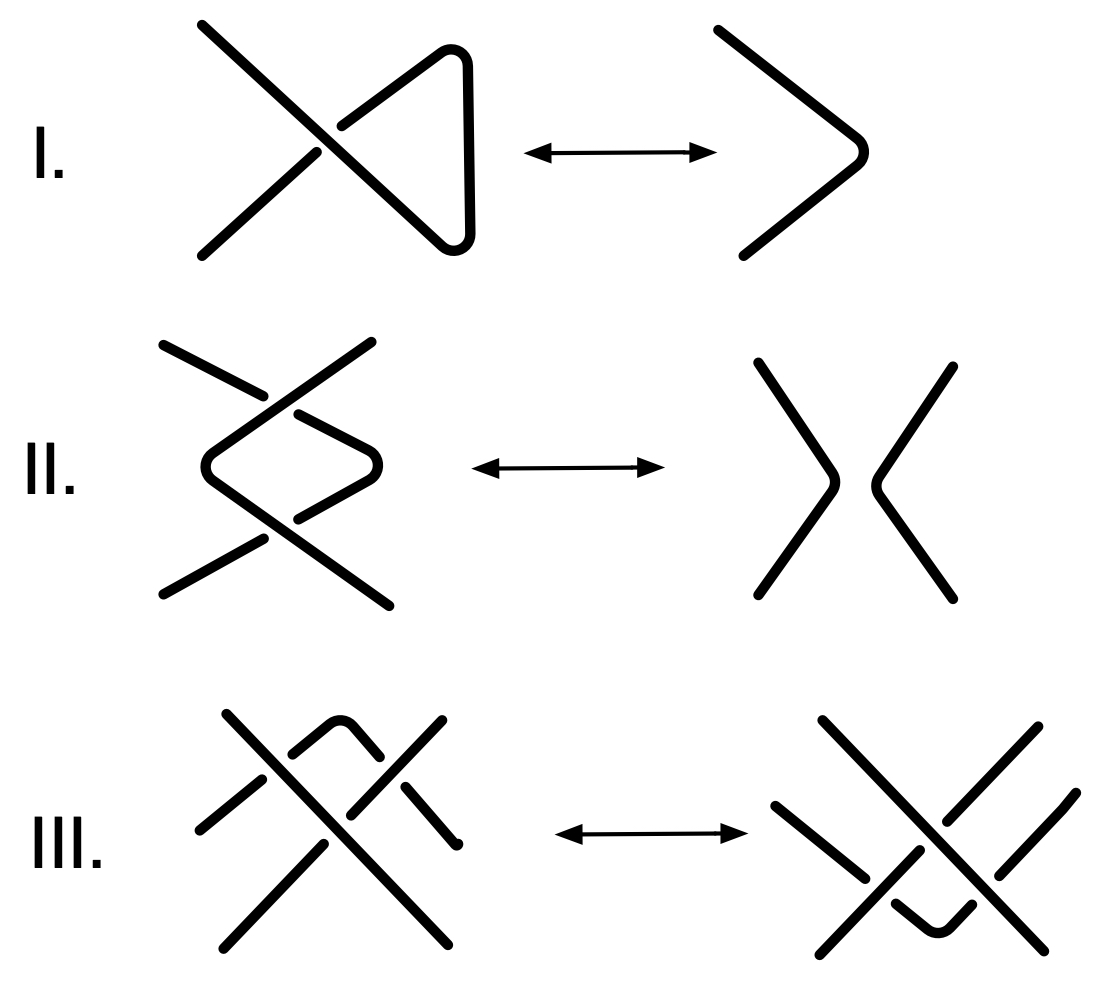}
\caption{Reidmeister moves}
\label{Rmoves}
\end{center}
\end{figure}

\begin{figure}
\begin{center}
\includegraphics[height=1in]{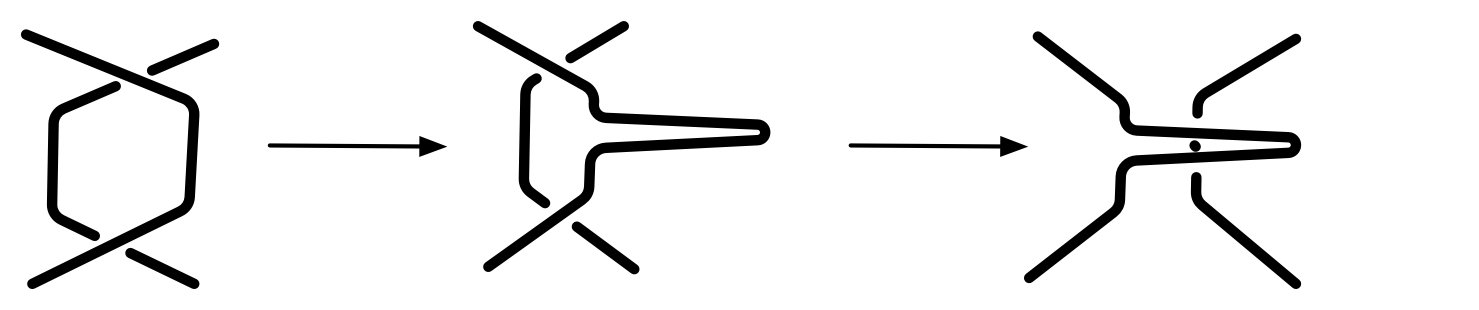}
\caption{Adding a finger before second Reidemeister move. }
\label{2move}
\end{center}
\end{figure}
\end{proof}

A link diagram $D$ is \textit{checkerboard colorable} if it is possible to shade certain regions of $D$ so that whenever two regions share a boundary, exactly one of them is shaded. A diagram with such a shading is \textit{checkerboard colored}. (We will denote it also by $D$ in order to avoid excessive notation.) Any checkerboard colorable diagram admits exactly two such shadings. The proof of the following is easy and left to the reader.

\begin{lemma}\label{CBC} A link diagram $D \subset S$ is checkerboard colorable if and only if the inclusion map $|D|\hookrightarrow S$ induces a trivial homomorphism of homology groups with $\Z/2$ coefficients. \end{lemma}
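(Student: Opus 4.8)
The plan is to prove both directions of the equivalence in Lemma \ref{CBC} using the standard duality between checkerboard colorings and $\Z/2$-cycles in the surface. Recall that a $\Z/2$-cycle on $|D|$ (viewed as a graph in $S$) need not bound, and the map $H_1(|D|;\Z/2)\to H_1(S;\Z/2)$ records exactly when it does not.

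First I would set up the correspondence between shadings and $1$-cochains. A choice of shading assigns to each region of $D$ an element of $\Z/2$ (shaded $=1$, unshaded $=0$); call this function $c$ on the set of regions. Passing across an edge $e$ of $|D|$ changes the region, so $e$ acquires the value $\delta c(e) = c(R_1)+c(R_2) \in \Z/2$, where $R_1,R_2$ are the two regions adjacent to $e$. The checkerboard condition says precisely that $\delta c(e)=1$ for every edge, i.e. that the all-ones $1$-cochain on $|D|$ is a coboundary of a $0$-cochain on the regions. Dually — since the regions are the $2$-cells and the edges the $1$-cells of the cell complex $S$ obtained from $|D|$ — this is exactly the statement that the mod-$2$ fundamental class of $|D|$, i.e. the sum of all edges, is a $2$-boundary in $S$, equivalently that $[|D|]=0$ in $H_1(S;\Z/2)$. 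At each $4$-valent vertex the four incident edges alternate between the two color classes, so the sum of all edges is automatically a $\Z/2$-cycle on $|D|$; thus $|D|$ always carries a well-defined mod-$2$ homology class $[|D|]\in H_1(S;\Z/2)$, and that class is the image of the fundamental class of $|D|$ under $H_1(|D|;\Z/2)\to H_1(S;\Z/2)$.

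Concretely, for the forward direction: given a checkerboard coloring, every edge borders one shaded and one unshaded region, so the chain $\sum_{e}e$ equals $\partial(\sum_{R\ \mathrm{shaded}}R)$ in $C_*(S;\Z/2)$; hence $[|D|]=0$ and the inclusion-induced map kills the fundamental class. Since $|D|$ is connected (or, for a general universe, argued componentwise — each closed curve of $\ell$ contributes a cycle and the whole universe is a union of these), the induced map $H_1(|D|;\Z/2)\to H_1(S;\Z/2)$ is trivial: $H_1(|D|;\Z/2)$ is generated by cycles built from the fundamental classes of the underlying circles together with "region-bounding" cycles, and one checks all of these map to $0$ once the fundamental class does. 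For the converse: if $[|D|]=0$ in $H_1(S;\Z/2)$, write $\sum_e e = \partial z$ for some $2$-chain $z=\sum_{R\in T}R$, $T$ a set of regions; define the shading by shading exactly the regions in $T$. Then for each edge $e$, the coefficient of $e$ in $\partial z$ is $1$, which says exactly one of the two regions adjacent to $e$ lies in $T$ — precisely the checkerboard condition.

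The main obstacle — and it is mild — is being careful that "$|D|\hookrightarrow S$ induces the trivial map on $H_1(-;\Z/2)$" is equivalent to "the fundamental $\Z/2$-class of the $1$-complex $|D|$ maps to $0$," not merely that some particular cycle maps to $0$. This requires identifying the relevant generators of $H_1(|D|;\Z/2)$: besides the fundamental classes of the component circles, a connected $4$-valent graph has extra loops, but each such loop, being a $\Z/2$-cycle on $|D|$, bounds in $S$ as soon as the regions on one side can be consistently assigned — which is automatic once a global checkerboard shading exists, and conversely any cycle that bounds in $S$ bounds a union of regions. So the content reduces to the cellular-chain-level identity $\sum_e e = \partial(\text{shaded regions})$, and the lemma follows. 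This is exactly the "easy" argument the authors allude to, so I would present it in two or three lines at this level of detail and leave the routine verification to the reader as they suggest.
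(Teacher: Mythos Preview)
The paper leaves this proof to the reader, so there is no argument to compare against. Your core claim --- that a checkerboard shading exists iff the mod-$2$ class $[|D|]=\sum_e e$ vanishes in $H_1(S;\Z/2)$, because a shading is precisely a $2$-chain with boundary $\sum_e e$ --- is correct and is the standard short argument.

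The gap is in your third paragraph, where you try to upgrade ``$[|D|]=0$'' to ``the induced map $H_1(|D|;\Z/2)\to H_1(S;\Z/2)$ is zero.'' That stronger statement is false. Take $S$ a torus and $|D|$ two disjoint parallel meridians (add Reidemeister~I kinks if you insist on $4$-valent vertices). The complement is two annuli and shading one of them is a valid checkerboard coloring, yet each meridian individually is nonzero in $H_1(S;\Z/2)$, so the induced map is nonzero. Your proposed generating set --- ``fundamental classes of the underlying circles together with region-bounding cycles'' --- already contains cycles that survive in $S$; the assertion that ``one checks all of these map to $0$ once the fundamental class does'' is exactly where the argument breaks.

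What the paper actually needs, and what the surrounding text uses (the abstract's ``$\Z/2$-homologically trivial links'' and the remark immediately after the lemma on mod-$2$ almost classical links), is only the condition $[|D|]=0$ in $H_1(S;\Z/2)$. The phrase ``induces a trivial homomorphism'' in the lemma appears to be a slight imprecision; your proof of the intended statement is fine, and you should simply drop the attempted strengthening.

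One small technical point on the converse: writing $\sum_e e=\partial z$ with $z$ literally a sum of \emph{regions} presupposes that the regions are $2$-cells, which fails for non-cellular diagrams. Either refine to a genuine CW structure and observe that within each region the chain $z$ is forced to be all-or-nothing, or argue dually: the Poincar\'e dual of $[|D|]$ is the cocycle ``number of crossings with $|D|$ mod $2$,'' and this vanishes iff the complementary regions admit a consistent $2$-coloring.
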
 

\begin{remark} 1. Links that satisfy the condition of Lemma \ref{CBC} were called \textit{mod-2 almost classical} in \cite{BGHW17}. The condition is equivalent to the link having a diagram admitting a mod-2 Alexander numbering \cite{CESW09}. 

2. Any Reidemeister move takes one checkerboard colored diagram to another (while changing the diagrams only locally). 
 If both diagrams are cellular, then their associated graphs are related by the \textit{Reidemeister graph moves} 
 in Figure \ref{RGmoves}. (See \cite{YK57} or \cite{Ka06}.)

3. Given a checkerboard colorable diagram for a link $\ell \subset S \times [0,1]$, the two shaded diagrams $D, D^*$ have a homological interpretation: Let $N$ be a tubular neighborhood of $\ell$, and consider the section of the long exact sequence of homology groups with $\Z/2$ coefficients that corresponding to the pair $N \subset S \times [0,1]$: 
\begin{equation}\label{exactseq} 0\to \Z/2 \to H_2(S \times [0,1], N) \to H_1(N) \end{equation}
The shaded diagrams $D, D^*$ can be seen as  spanning surfaces of $\ell$; they represent elements $[D], [D*] \in H_2(S \times [0,1], N)$.  The classes of meridians $m_1, \ldots, m_d$ of the components of $\ell$ form a basis for $H_(N) \cong (\Z/2)^d$. By exactness of the sequence (\ref{exactseq}), the \textit{total class} $m= [m_1] + \ldots +[m_d]$ has precisely two preimages in $H_2(S \times [0,1], N)$. Clearly $[D]$ and $[D^*]$ are preimages. Moreover, they are distinct since $D \cup D^* = S$ and hence $[D]+[D^*]$ generates $H_2(S \times [0,1]) \cong H_2(S) \cong \Z/2$. 

Reidemeister moves can interchange the two homology classes. In fact, if $S = \S^2$, then this is always possible \cite{YK57}. However, for surfaces of higher genus this is not generally the case (see Example \ref{thetaex} below.) 

\end{remark}

A checkerboard colored cellular link diagram $D \subset S$ determines a signed embedded graph $G \subset S$, unique up to isotopy, via the ``medial graph" construction (see Figure \ref{medial}). Vertices of $G$ correspond to shaded regions of $D$, while each pair of vertices corresponding to shaded regions meeting at a crossing of $D$ are joined by an edge $e$ with weight $w_e$ determined as in Figure \ref{medial}. From the graph we can reconstruct $D$. The other checkerboard coloring of $D$ determines a dual signed graph $G^*$. If $e^*$ is an edge of $G^*$ dual to $e$, then $w_{e^*} = -w_e$. 

\begin{figure}
\begin{center}
\includegraphics[height=1.3 in]{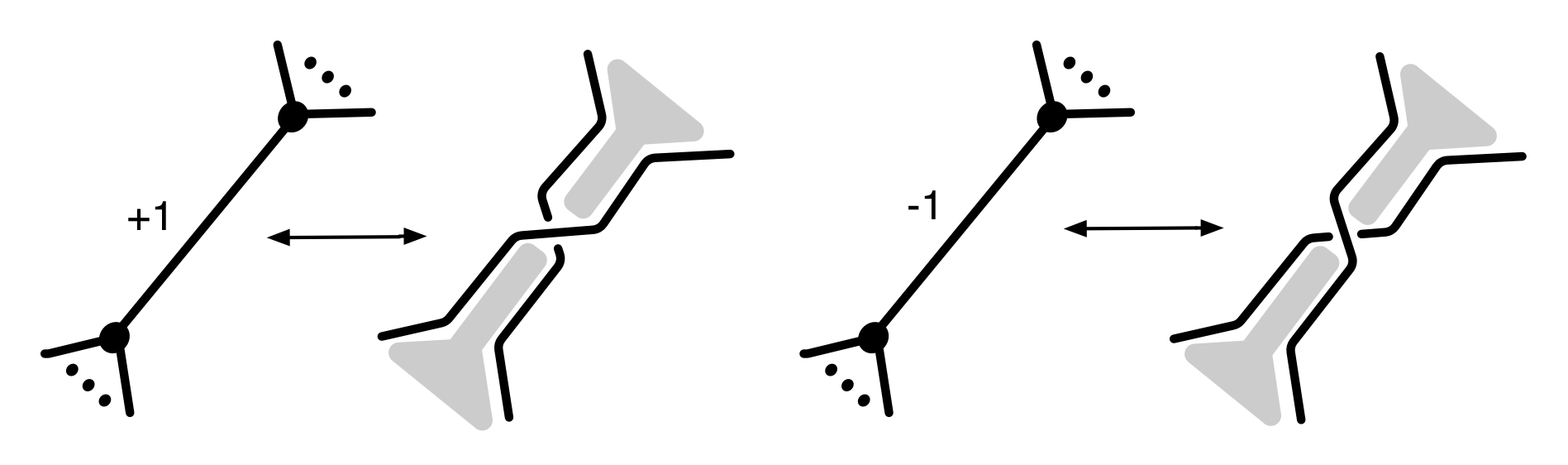}
\caption{Constructing a checkerboard colored link diagram from a medial graph}
\label{medial}
\end{center}
\end{figure}

\begin{figure}
\begin{center}
\includegraphics[height=4 in]{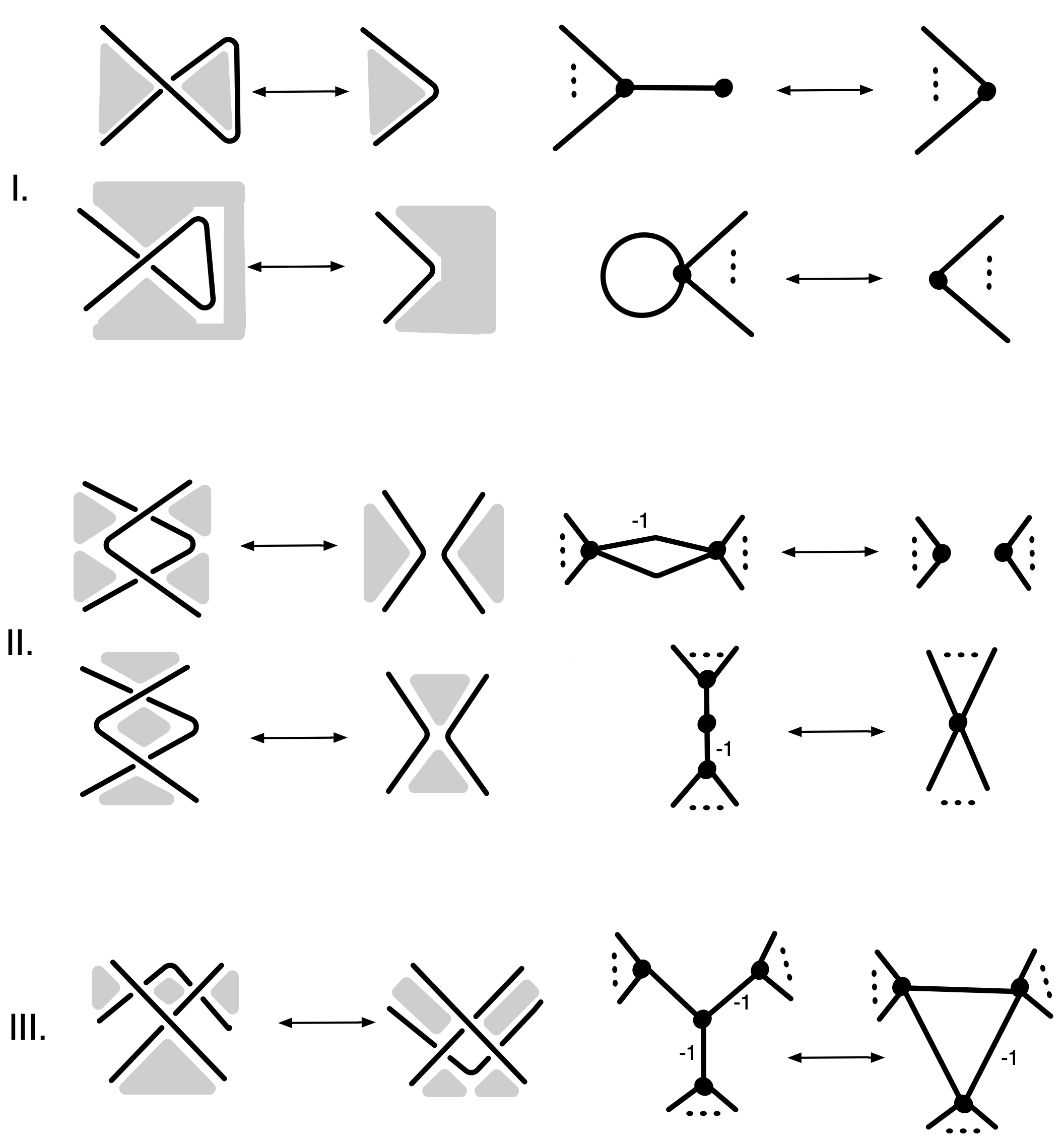}
\caption{Reidmeister moves and Reidemeister graph moves}
\label{RGmoves}
\end{center}
\end{figure}

\begin{prop} \label{module} Assume that $G, G'$ are signed graphs (not necessarily embedded) in a closed, connected orientable surface $S$.  If $G'$ is the result of applying a Reidemeister graph move to $G$, then the $\La$-modules $\L_G, \L_{G'}$ are isomorphic.\end{prop}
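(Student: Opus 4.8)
The plan is to verify the isomorphism $\L_G \cong \L_{G'}$ separately for each of the three Reidemeister graph moves in Figure~\ref{RGmoves}, exploiting the fact that these moves are \emph{local}: outside a disk $\Delta \subset S$ the graphs $G$ and $G'$ coincide, and inside $\Delta$ they differ by a small modification involving at most one or two new vertices and a few new edges. Since the modification happens inside a disk, every edge of $G$ or $G'$ that lies inside $\Delta$ has trivial connection ($\phi_e = 1$), because the homology basis curves can be isotoped off $\Delta$; thus the new rows and columns of the Laplacian matrix involve only integer entries and the variables $x_i,y_i$ enter only through the part of the graph that is common to $G$ and $G'$. The strategy for each move is then to write the Laplacian matrix $L_{G'}$ in block form relative to the ``old'' vertices and the ``new'' vertices, and to use elementary row and column operations over $\La$ — which preserve the cokernel — to eliminate the new generators, arriving back at a matrix presenting $\L_G$.

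Concretely, I would proceed as follows. First I would fix conventions: label the region-vertices involved in the local picture, record the weights on the new edges (using Figure~\ref{medial} for the $w_e$), and note that the dual relation $w_{e^*} = -w_e$ is what makes the signs work out. Second, for the first Reidemeister graph move (which on the graph side adds a single loop or a single pendant edge with a new valence-one vertex, depending on which shading one looks at): a loop contributes $\sigma_e(2 - \phi_e - \phi_e^{-1})$ to the relevant diagonal entry, and since $\phi_e = 1$ inside a disk this contribution is $0$, so $L_{G'} = L_G$ on the nose; the pendant-vertex case gives a new row/column $(\,w,\,-w,\dots)$ which one clears by adding the new column to the old one, leaving a unit $\pm 1$ (or $\pm i$ after the $\sqrt{w}$ bookkeeping, but the determinant of the $1\times 1$ block is $\pm1$ on the diagonal after collapse) that splits off a free summand — and here I would invoke the normalization discussion after Forman's formula to make sure orders match. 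Third, for the second Reidemeister graph move: two new edges (or a new bigon) appear between the same pair of old vertices with opposite weights $w$ and $-w$, so their combined off-diagonal contribution $w\phi + (-w)\phi$ and diagonal contribution cancel, again giving $L_{G'} = L_G$ directly; in the variant that introduces a new vertex, one does one round of elimination as in the previous case. Fourth, for the third Reidemeister graph move (the $\Delta$--$Y$ / triangle-star type exchange, corresponding to Reidemeister III): here there is genuine content — a triangle on three old vertices is replaced by a star with one new central vertex, or more precisely the local pictures before and after both have the same three boundary vertices — and I would show by explicit row/column reduction that, after eliminating the central new vertex via its row (whose diagonal entry is a unit times an integer, invertible after we track the $\pm1$ weights carefully, or more robustly by a Schur-complement computation over the fraction field followed by a clearing-denominators argument), the Schur complement on the three boundary vertices is exactly $L_G$ restricted to those vertices plus the unchanged exterior. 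Finally I would remark that in all cases the off-diagonal variable entries $x_i^{\pm1}, y_i^{\pm1}$ are carried along untouched because they live on the fixed exterior edges, so no issue with the non-commutative-looking $\La$ arises.

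The main obstacle I expect is the third move: one must check that the local submatrix replacement really does produce isomorphic cokernels and not merely matrices with the same determinant $\De_G = \De_{G'}$ (the equality of polynomials would already follow from the skein relation \eqref{skein} applied to the edges involved, but module isomorphism is strictly stronger). The safe route is to exhibit an explicit invertible matrix $P \in GL(\La)$ with $P L_{G'} P^{\mathrm{t}}$ (or $P L_{G'} Q$ for suitable $P,Q$) equal to $L_G \oplus (\text{identity block})$; finding $P$ amounts to writing down the sequence of stabilizations and elementary operations, which for the $\Delta$--$Y$ exchange is the one nontrivial computation of the proof. A subtlety to watch: the Laplacian is symmetric under $\phi \mapsto \phi^{-1}$ transpose (the $F F^{\mathrm t}$ structure of the earlier remark), so I should perform \emph{congruence} operations $M \mapsto PMP^{*}$ rather than arbitrary one-sided operations when I want to preserve that structure, though for the bare statement ``$\L_G \cong \L_{G'}$ as $\La$-modules'' one-sided operations on the presentation matrix suffice and I would use those for brevity. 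I would also double-check the degenerate sub-cases of each move where two of the involved regions coincide (so the local picture has fewer than the generic number of distinct vertices), since the medial construction can identify region-vertices and these cases must be handled, typically by the same elimination with a loop appearing — which, being inside a disk, is trivial.
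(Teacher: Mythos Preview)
Your plan matches the paper's: handle each Reidemeister graph move by writing the Laplacian relations for the affected vertices and eliminating the extra generator. The paper does precisely this for representative cases of the first and third moves, carrying the connections $\phi_e$ along symbolically rather than normalizing them to $1$; your gauge argument is fine but unnecessary, since in the convention actually used (visible in the example and in the $E_v$ notation of the proof) the diagonal entry at a vertex is the integer sum of incident edge signs and never involves $\phi$.

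One genuine caution concerns the third move. You must verify that the central vertex $v$ has coefficient $\pm 1$ in its own relation, because that is what lets you solve for $v$ over $\La$ and eliminate it. This holds because the three edges at $v$ carry signs $+,-,-$ (or the dual pattern), so the diagonal entry is $\pm 1$; the paper simply records the relation as $-v = \phi_{v,w_1}w_1 - \phi_{v,w_2}w_2 - \phi_{v,w_3}w_3$ and substitutes. Your proposed fallback---a Schur complement over the fraction field followed by ``clearing denominators''---is \emph{not} valid here: that would establish $\De_G = \De_{G'}$ but not $\L_G \cong \L_{G'}$, since equal module orders do not force isomorphic $\La$-modules. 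Drop the fallback and check the unit directly; once you see the coefficient is $\pm 1$, the elimination is an honest sequence of elementary operations over $\La$ and the proof goes through exactly as you outlined.
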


\begin{proof} There are three moves to check, with multiple cases depending on shading and crossing signs. We check two representative cases, and leave the rest to the reader. 

We examine the first Reidemeister graph move. Vertices $v, w$ in Figure \ref{reidemeister1}(a) contribute the relations 
\begin{equation} (E_v+1)v = {\Sigma}_ v + \phi_e w, \quad w = \phi^{-1}_e v, \end{equation}
where $E_v$ is the sum of signs of edges incident to $v$ but not $w$ (with loops counted twice), ${\Sigma}_v$ is the $\La$-linear combination of vertices other than $w$ that are joined by edges to $v$, and $e$ is the directed edge from $v$ to $w$. We use the second relation to eliminate $w$. The first relation becomes $(E_v)v = {\Sigma}_v$, which is the relation corresponding to the vertex $v$ in Figure \ref{reidemeister1}(b). Hence $\La_G$ is unchanged by the move.

Next we consider the third Reidemeister graph move.  The vertices $v, w_1, w_2, w_3$ in Figure \ref{move2}(a) contribute the  relations (where symbols have meaning similar to those in the previous case):
\begin{enumerate} 
\item[] $-v = \phi_{v, w_1} w_1 - \phi_{v, w_2} w_2 - \phi_{v, w_3}w_3$ 
\item[] $(E_{w_1}+1)w_1 = {\Sigma}_{w_1} + \phi^{-1}_{v, w_1}v$
\item[] $(E_{w_2}-1)w_2 = {\Sigma}_{w_2} - \phi^{-1}_{v, w_2}v$
\item[] $(E_{w_3}-1)w_3 = {\Sigma}_{w_3} - \phi^{-1}_{v, w_3}v$
\end{enumerate}
We use the first relation to eliminate $v$. The three remaining relations  become:
\begin{enumerate}
\item[] $(E_{w_1}+2)w_1 = {\Sigma}_{w_1} + \phi_{w_1, w_2} w_2 + \phi_{w_1, w_3} w_3$
\item[] $E_{w_2} w_2 = {\Sigma}_{w_2} + \phi_{w_2, w_1} w_1 - \phi_{w_2, w_3} w_3$
\item[] $E_{w_3} w_3 = {\Sigma}_{w_3} + \phi_{w_3, w_1} w_1 - \phi_{w_3, w_2} w_2,$
\end{enumerate}
which are the relations corresponding to $w_1, w_2, w_3$ in Figure \ref{move2}(b).
\end{proof} 

\begin{figure}
\begin{center}
\includegraphics[height=1.3 in]{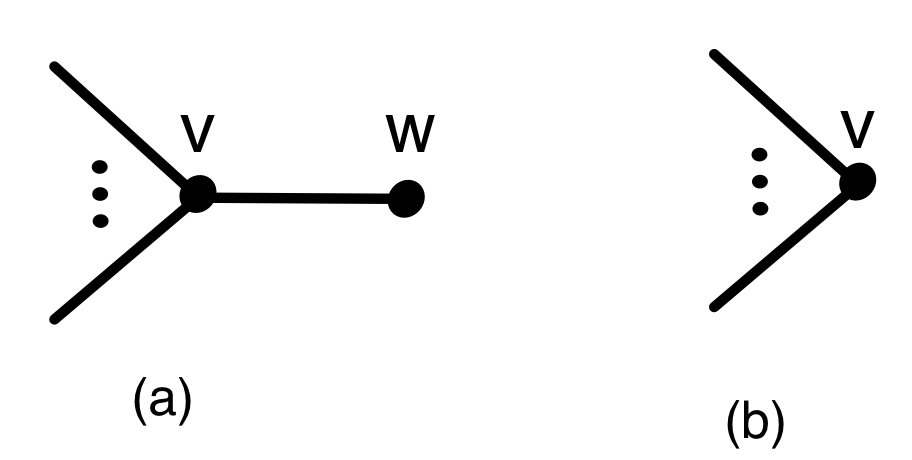}
\caption{Graph transformation corresponding to first Reidemeister move}
\label{reidemeister1}
\end{center}
\end{figure}

\begin{figure}
\begin{center}
\includegraphics[height=2 in]{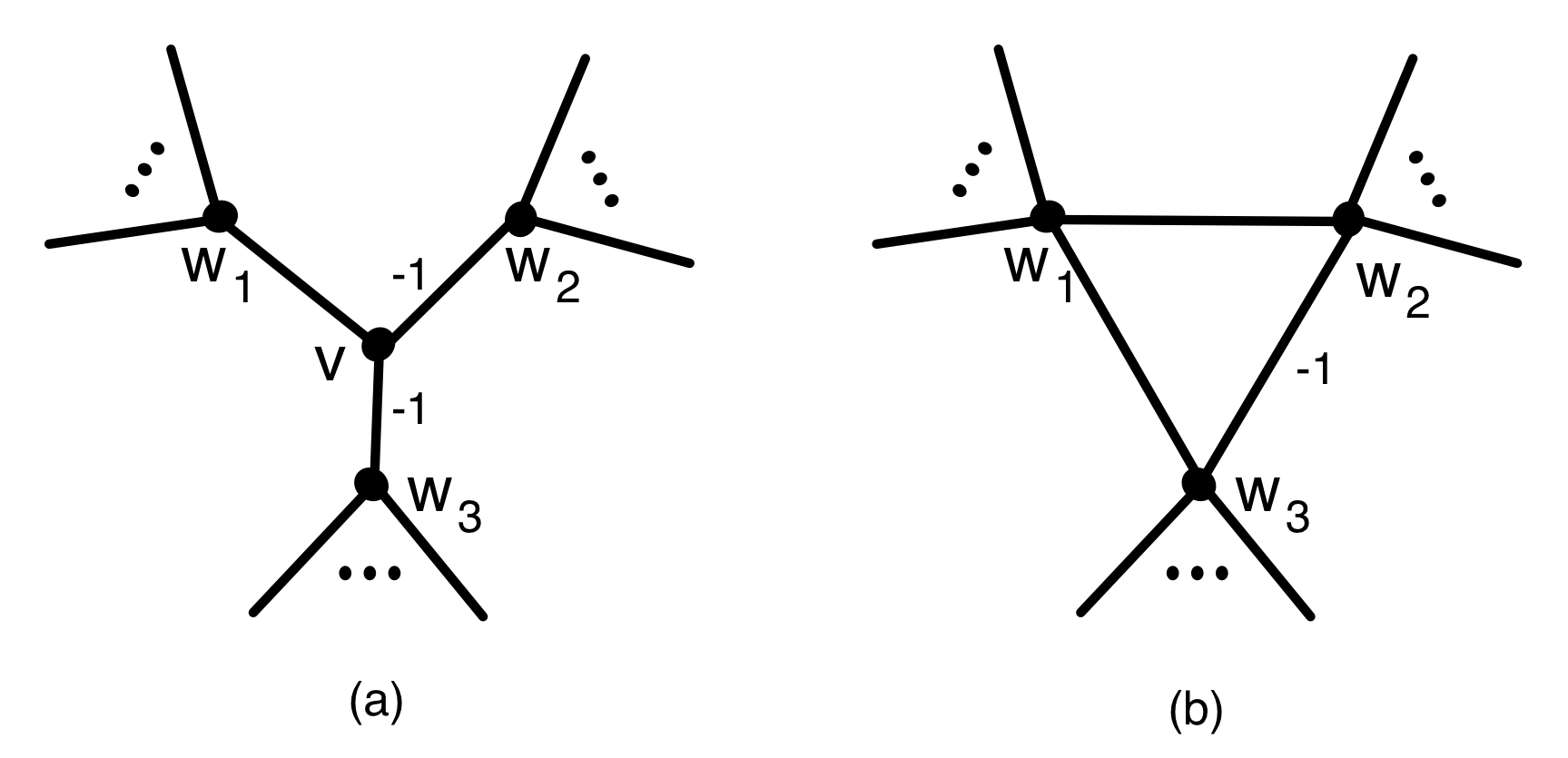}
\caption{Graph transformation corresponding to third Reidemeister move}
\label{move2}
\end{center}
\end{figure}


\begin{cor} \label{pairmodule} Assume that $G$ is a signed embedded graph associated to a cellular checkerboard colored diagram of a link $ \ell \subset S \times [0,1]$. The pair $\{\L_G, \L_{G^*}\}$ is an invariant of $\ell$.  \end{cor}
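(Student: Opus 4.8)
The plan is to deduce Corollary \ref{pairmodule} from Proposition \ref{module} together with the discussion preceding it, in three stages. First, I would record that a link $\ell \subset S \times [0,1]$ admitting a cellular checkerboard colored diagram is, by Proposition \ref{cellular}, connected to any other such diagram through a sequence of cellular checkerboard colored diagrams, each obtained from the previous by a single Reidemeister move; I should note that one may always first make a diagram cellular by type II moves (as remarked before Proposition \ref{cellular}) and that a Reidemeister move carries a checkerboard coloring to a checkerboard coloring (Remark 2 after Lemma \ref{CBC}). Thus it suffices to check invariance of the \emph{pair} $\{\L_G,\L_{G^*}\}$ under a single Reidemeister move relating two cellular checkerboard colored diagrams.

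Next, I would translate each such Reidemeister move into the medial graph language: by the cited results of Yajima--Kinoshita \cite{YK57} and Kamada \cite{Ka06}, a Reidemeister move between cellular checkerboard colored diagrams corresponds to one of the Reidemeister graph moves of Figure \ref{RGmoves} applied to the associated signed graph $G$ (the shaded graph) and, simultaneously, a dual Reidemeister graph move applied to $G^*$. Here I would invoke the construction recalled just before Proposition \ref{module}: the two checkerboard colorings of $D$ give $G$ and its surface dual $G^*$, with $w_{e^*} = -w_e$, and I would emphasize that whichever of the two shadings we fix, the resulting graph changes by \emph{some} Reidemeister graph move. Applying Proposition \ref{module} to $G$ gives $\L_G \cong \L_{G'}$, and applying it to $G^*$ gives $\L_{G^*} \cong \L_{(G')^*}$; hence the unordered pair is preserved by the move.

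Finally, I would address the one genuine subtlety: a priori the labelling of the two members of the pair could be swapped by a move (indeed Remark 3 after Lemma \ref{CBC} points out this can happen), which is exactly why the invariant is the \emph{unordered} pair $\{\L_G, \L_{G^*}\}$ rather than the ordered one. So no matter which of $G$, $G^*$ the move acts on "as $G$" in the hypothesis of Proposition \ref{module}, the set $\{\L_G,\L_{G^*}\}$ of isomorphism classes is unchanged. Combining this with the first step, the pair is constant along any R-equivalence through cellular checkerboard colored diagrams, and since every diagram of $\ell$ is R-equivalent to a cellular checkerboard colored one representing the same link, $\{\L_G,\L_{G^*}\}$ depends only on $\ell$.

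The main obstacle I anticipate is not any computation but the bookkeeping in the second step: one must be sure that the correspondence between Reidemeister moves on checkerboard colored cellular diagrams and Reidemeister graph moves on $G$ (and dually on $G^*$) is exactly as catalogued in Figure \ref{RGmoves}, including all shading/crossing-sign cases, so that Proposition \ref{module} genuinely applies in each instance; this is where one leans on \cite{YK57} and \cite{Ka06}. A secondary point to handle carefully is that Proposition \ref{module} is stated for abstract (not necessarily embedded) signed graphs, which is exactly what is needed since intermediate graphs in the skein/move process need not be embedded, but we must confirm the dual graph $G^*$ is well defined at each embedded stage — which it is, by the medial construction — and that the moves on $G$ and $G^*$ occur in tandem.
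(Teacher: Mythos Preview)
Your proposal is correct and follows essentially the same approach as the paper's own proof: reduce to a single Reidemeister move between cellular checkerboard colored diagrams via Proposition~\ref{cellular}, translate the move to a Reidemeister graph move using the Yajima--Kinoshita/Kauffman correspondence, and then invoke Proposition~\ref{module}. Your version is somewhat more explicit than the paper's in that you track both $G$ and $G^*$ through each move and address the possible swap of the two colorings separately, whereas the paper simply observes that the sequence of moves carries $G'$ to either $G$ or $G^*$ and concludes $\L_{G'}\cong \L_G$ or $\L_{G^*}$; but the underlying logic is identical.
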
 

\begin{proof} Assume $G$ is associated to $D$. If $D'$ is another cellular checkerboard colored diagram, then by Proposition \ref{cellular} there is a sequence of Reidemeister moves taking $D'$ to $D$ and such that each intermediate diagram is cellular. The moves transform $G'$ to either $G$ or $G^*$ (see Remark \cite{CBC}). Proposition \ref{module} implies that $\L_{G'}$ is isomorphic to $\L_{G}$ or $\L_{G^*}$.
\end{proof}

Recall that we take the normalized Laplacian polynomial $\De_G$ is well defined up to multiplication by $-1$. 

\begin{cor} \label{pairpoly} Assume that $G$ is a signed embedded graph associated to a cellular checkerboard colored diagram of a link $ \ell \subset S \times [0,1]$. The pair $\{\De_G,   \De_{G^*}\}$ is an invariant of $\ell$.  \end{cor}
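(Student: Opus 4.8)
The plan is to deduce this from Corollary \ref{pairmodule} together with the normalization properties of $\De_G$. By Proposition \ref{cellular}, any two cellular checkerboard colored diagrams of $\ell$ are joined by a finite sequence of cellular diagrams in which consecutive terms differ by a single Reidemeister move; by the discussion following Lemma \ref{CBC} the associated signed graphs are then joined by a corresponding sequence of Reidemeister graph moves, terminating at $G$ or at $G^*$. So it suffices to show that a single Reidemeister graph move transforming $G$ into $G'$ changes the normalized Laplacian polynomial by at most a sign, i.e.\ $\De_{G'} = \pm\,\De_G$; iterating then yields $\{\De_G,\De_{G^*}\} = \{\De_{G'},\De_{(G')^*}\}$.

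First I would record what is immediate from Proposition \ref{module}. Since $\L_G \cong \L_{G'}$ and each is presented by a square matrix ($L_G$ of size $n\times n$, $L_{G'}$ of size $n'\times n'$, where $n,n'$ may differ), the $0$-th Fitting ideal, being an isomorphism invariant of the module and equal to $(\det L_G)$, respectively $(\det L_{G'})$, for such presentations, gives $(\De_{G'}) = (\De_G)$ as ideals of $\La$. Hence $\De_{G'} = u\,\De_G$ for some unit $u\in\La$, that is $u = \pm\,\mathbf{x}^{\mathbf a}$, a signed Laurent monomial. If $\De_G = 0$ we are done, so assume $\De_G\neq 0$; it then remains to kill the monomial factor $\mathbf{x}^{\mathbf a}$. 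For this I would invoke the symmetry built into Forman's formula (\ref{formaneq}): the involution $\iota$ of $\La$ inverting every basis variable sends each connection $\phi$ to $\phi^{-1}$, hence fixes each factor $2-\phi-\phi^{-1}$ and fixes each sign product $\prod_e \s_e$, so $\iota(\De_G) = \De_G$ and $\iota(\De_{G'}) = \De_{G'}$. Applying $\iota$ to $\De_{G'} = u\,\De_G$ gives $\De_{G'} = \iota(u)\,\De_G$, so $u = \iota(u)$ in the integral domain $\La$, forcing $\mathbf{x}^{2\mathbf a} = 1$ and thus $u = \pm 1$. This yields $\De_{G'} = \pm\,\De_G$ as required.

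An alternative, more hands-on route avoids Fitting ideals entirely: revisit the case analysis in the proof of Proposition \ref{module} but track determinants rather than modules. In each move the vertex generators present in configuration (a) are eliminated using relations in which the coefficient of the eliminated generator is a unit — in fact $1$ — so the passage from $L_G$ to $L_{G'}$ is realized by elementary column operations followed by deleting the row and column through a $\pm1$ pivot, which multiplies the determinant by $\pm1$; checking the same finite list of cases as in that proof then gives $\De_{G'}=\pm\De_G$ directly. Either way, the one genuine subtlety — and the step I expect to need the most care — is exactly the normalization: upgrading ``$\L_G\cong\L_{G'}$'' to ``$\De_{G'}=\pm\De_G$'' rather than merely ``$\De_{G'}$ and $\De_G$ agree up to units of $\La$,'' since an unrestricted unit carries a Laurent-monomial ambiguity that the stated invariant has already discarded. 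The palindromic symmetry of $\De_G$ under $\iota$ is the cleanest device I see for closing that gap.
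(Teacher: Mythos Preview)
Your proposal is correct. The paper itself gives no explicit proof of this corollary: it simply recalls the normalization convention (``$\De_G$ is well defined up to multiplication by $-1$'') and states the result, treating it as immediate from Corollary~\ref{pairmodule}. Your route~2 is exactly what the proof of Proposition~\ref{module} already does at the matrix level --- each eliminated generator has a $\pm 1$ pivot --- so that is the argument the paper tacitly relies on.

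Your route~1, using Fitting ideals to get $\De_{G'}=u\,\De_G$ for a unit $u$ and then the palindromic symmetry $\iota(\De_G)=\De_G$ coming from Forman's formula to force $u=\pm 1$, is a genuinely different and self-contained argument that the paper does not mention. It has the advantage of not requiring one to revisit the case analysis of Proposition~\ref{module}; on the other hand, route~2 is more elementary and is essentially free once that proposition is in hand. You are right to flag the normalization step as the only real subtlety: the paper glosses over the passage from ``module order up to units'' to ``up to $\pm 1$,'' and either of your two devices closes that gap cleanly.
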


\begin{remark} \label{dependence} 1. Like $\L_G$, the polynomial $\De_G$  depends on the symplectic basis we chose for $H_1(S; \Z)$. If we regard $\De_G$ up to possible change of symplectic basis, then $\{\De_G, \De_{G^*}\}$ is an invariant of $\ell$ up to automorphisms of $S$. 

2. When $S = \S^2$, the matrix $L_G$ is a Goeritz matrix of $\ell$ (see \cite{STW19}). If we regard $\ell$ as a link in the 3-sphere, then $\L_G$ is isomorphic to $\Z \oplus H_1(M_2;\Z)$, where $M_2$ is the 2-fold cover of $\S^3$ branched over $\ell$. 
\end{remark}

\begin{figure}
\begin{center}
\includegraphics[height=2 in]{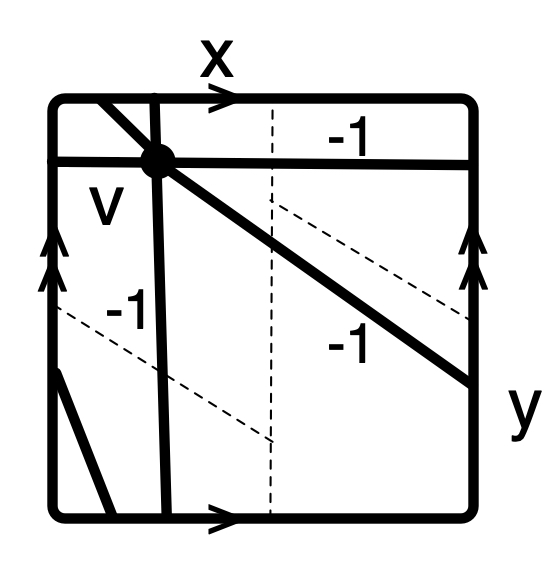}
\caption{The dual $G^*$ of a theta graph}
\label{thetadual}
\end{center}
\end{figure}

Let $D \subset \S^2$ be a checkerboard colored link diagram, and let $D^*$ denote the dual checkerboard colored diagram.
By stretching an outer arc of $D$ and pulling over the rest of the diagram and then around the sphere to its original position, we see that $D$ and $D^*$ are equivalent under Reidemeister moves. (This fact was first observed in  \cite{KY57}, where a more formal argument appears.) Hence  $\L_G$ and $\L_{G^*}$ are isomorphic.

The following example shows that for diagrams in surfaces of higher genus, $\L_G$ and $\L_{G^*}$ need not be isomorphic. 

\begin{example} \label{thetaex} The Laplacian module of the graph $G$ in Figure \ref{theta} has a presentation
$$\L_G \cong \langle v_1, v_2 \mid 3 v_1 = (1 + x+ y) v_2, \quad 3 v_2 = (1+ x^{-1}+ y^{-1})v_1 \rangle.$$
The dual graph $G^*$, which appears in Figure \ref{thetadual}, has a single vertex. Its Laplacian module is 
$$\L_{G^*} \cong \< v \mid 6 v = (x+x^{-1} + y + y^{-1} + x y^{-1} + x^{-1} y)v \rangle.$$

To see that the modules are not isomorphic, we reduce the ring $\La$ to $\Z$, setting $x, y$ equal to $1$.
(More rigorously, we treat $\Z$ as a trivial right $\La$-module and pass to the tensor product modules $\Z \otimes_\La \L_G$ and $\Z\otimes_\La  \L_{G^*}$.) The resulting abelian groups are $\Z \oplus \Z/3$ and $\Z$, respectively. Since they are not isomorphic, neither are $\L_G$ and $\L_{G^*}$. 
\end{example}

The Laplacian polynomials $\De_G$ and $\De_{G^*}$ in Example \ref{thetaex} are the same. This holds generally for signed graphs   in the torus \cite{SW19}, a consequence of the fact that null-homologous curves in the torus are contractible. The following example shows that equality does not hold for surfaces of higher genus.

\begin{example} Consider the signed graph $G$ embedded in the surface $S$ of genus 2 in Figure \ref{genus2}. With respect to the indicated symplectic basis for $H_1(S; \Z)$ the Laplacian module $\L_G$ has presentation matrix

\begin{equation} \begin{pmatrix} 5-x -x^{-1} -y -y^{-1} & -1\\ -1 & 5 -u -u^{-1} -v -v^{-1}\\  \end{pmatrix} \end{equation}

and $$\De_G = 24 - 5(x+x^{-1}+y+y^{-1} +u + u^{-1} + v + v^{-1}) + $$ $$ux + u^{-1}x^{-1} + ux^{-1} + u^{-1}x + v x + v^{-1}x^{-1}+ v x^{-1}+v^{-1}x+ $$ $$ uy + u^{-1} y^{-1} + u y^{-1} + u^{-1}y + v y + v^{-1} y^{-1} + v y^{-1}+v^{-1} y.$$

The dual graph $G^*$ has a single vertex, loops (suitably directed) with connections $x, y, u, v$ , and a fifth loop (transverse to $e$)  that is null-homologous and so has trivial connection. Hence
$$\De_{G^*} = 8 -x - x^{-1} - y - y^{-1} -u -u^{-1} - v - v^{-1}.$$

\begin{figure}
\begin{center}
\includegraphics[height=2.5 in]{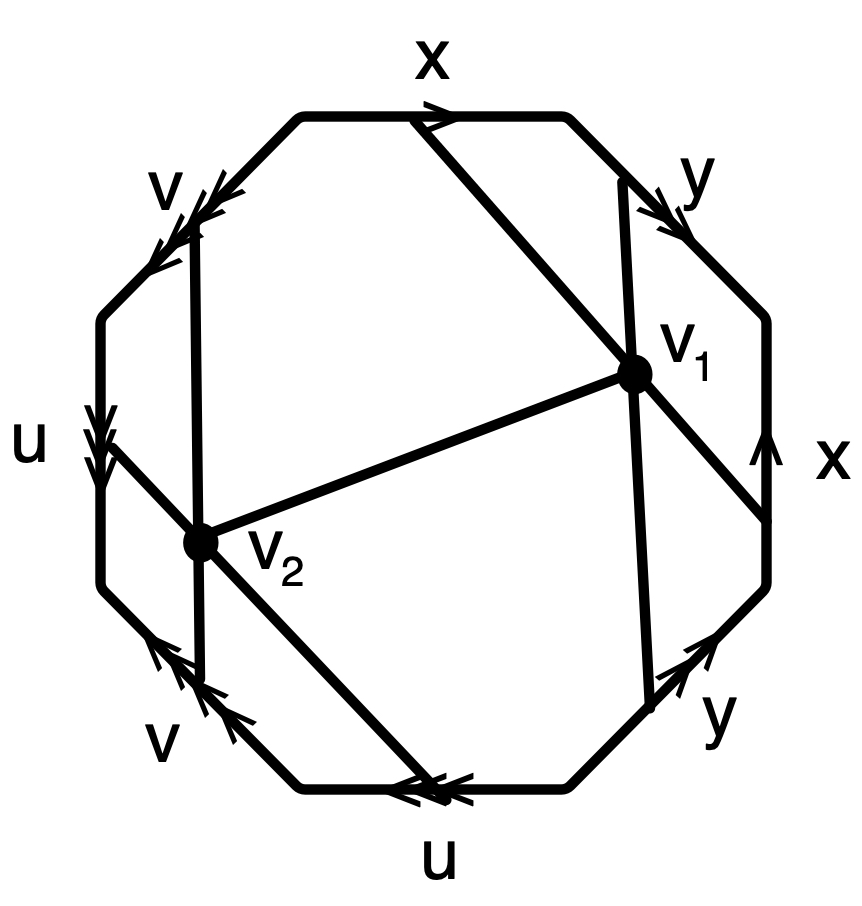}
\caption{Graph $G$ in a surface of genus 2 such that $\De_G$ and $\De_{G^*}$ are different}
\label{genus2}
\end{center}
\end{figure}

\end{example} 

\begin{example} The theta graph in Example \ref{thetaex} corresponds to a 3-component link $\ell_1$ in the torus. Placing the weight $-1$ on one of the three edges results in three  links $\ell_1, \ell_2, \ell_3$. The  links and their graphs  $G_1, G_2, G_3$ are shown in Figure \ref{three}. The corresponding Laplacian polynomials with respect to the indicated symplectic basis are: 
\begin{itemize} 
\item{} $\Delta_{G_1}= -2 + (x + x^{-1}) + (y+y^{-1}) -(x^{-1}y + xy^{-1})$
\item{} $\Delta_{G_2}= -2 + (x + x^{-1}) - (y+y^{-1}) + (x^{-1}y + xy^{-1})$
\item{} $\Delta_{G_3}= -2  -(x + x^{-1}) + (y+y^{-1}) +(x^{-1}y + xy^{-1})$
\end{itemize}

\noindent Consequently, the links are pairwise non-isotopic. 

By considering fundamental domains in the universal cover of $S$, one can readily see that there is an order-3 automorphism $f$ of the $S$ sending $x$ to $y^{-1}$ and $y$ to $x y^{-1}$, where $x$ and $y$ are generators of $\pi_1(S; \Z)$, and mapping the diagram of $\ell_1$ (resp. $\ell_2$) to that of $\ell_2$ (resp. $\ell_3$). 
Moreover, $f$ induces a change of symplectic basis of $H_1(S; \Z)$ mapping $x \mapsto -y, y \mapsto x-y$ that transforms $\De_{G_1}$ (resp. $\De_{G_2}$) to $\De_{G_2}$ (resp. $\De_{G_3}$). 

\end{example}

\begin{figure}
\begin{center}
\includegraphics[height=2.8 in]{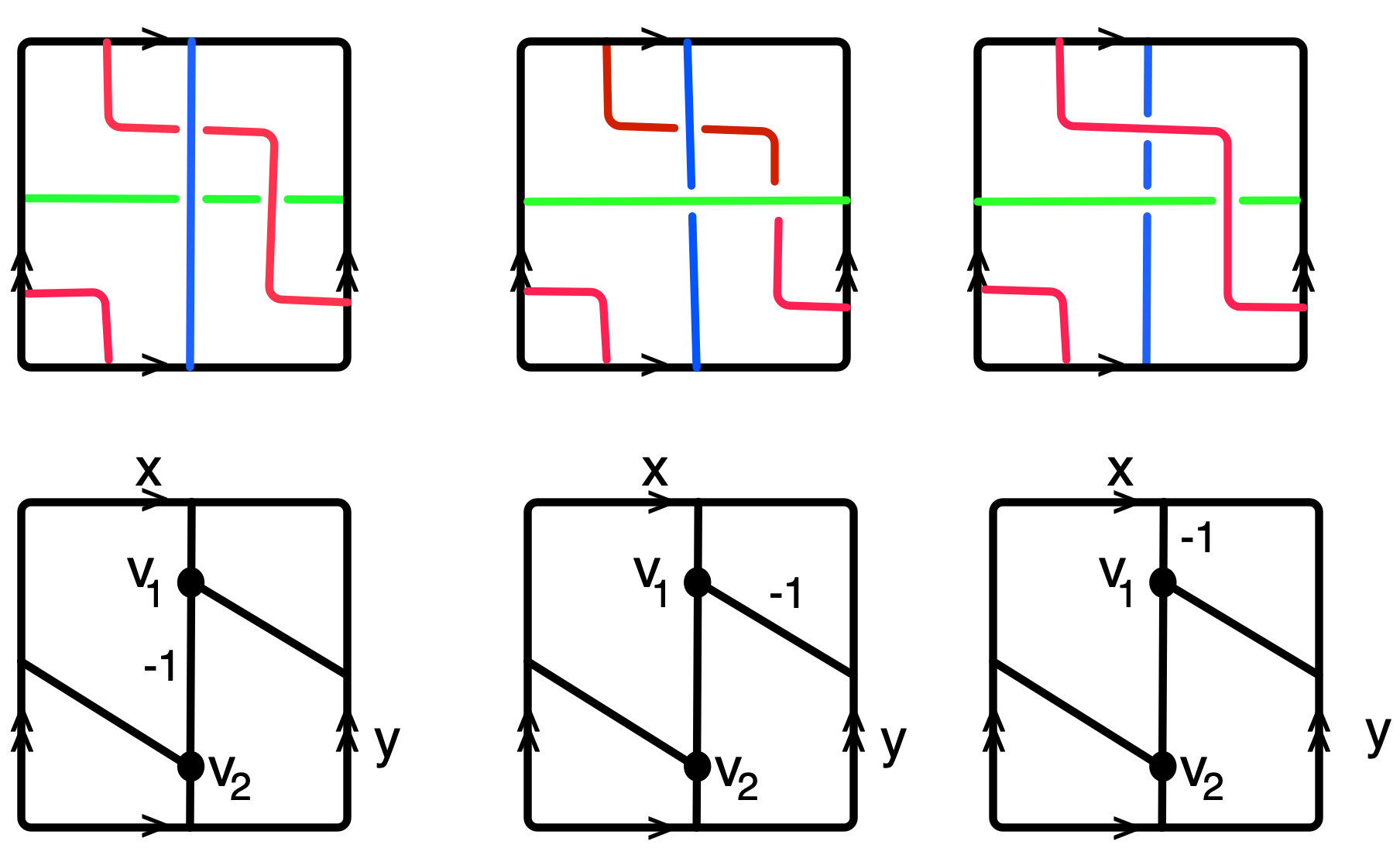}
\caption{3-component links $\ell_1, \ell_2, \ell_3$ and their graphs $G_1, G_2, G_3$ (left to right)}
\label{three}
\end{center}
\end{figure}

\section{Virtual links and genus} \label{genus}

A \textit{virtual link} $\ell$ can be regarded as a link diagram $D$ defined up to Reidemeister moves, surface automorphisms and addition or deletion of hollow handles. Handle addition is 0-surgery, replacing $\S^0 \times \D^2 \subset S \setminus D$ with $\D^1 \times \S^1$, thereby increasing the genus of the surface. The handle deletion is 1-surgery along a simple closed curve $C \subset S \setminus D$, replacing $C \times \D^1$ with $\D^2 \times \S^0$; when $C$ is essential and nonseparating, the surgery decreases the genus of the surface. The \textit{virtual genus} $vg\ (\ell)$ is the minimum possible genus $g\ (S)$ a surface $S$  that can be obtained in this way. 

The information in the polynomial $\De_G$ can often establish that $g(S) = vg(\ell)$. Our approach is similar to that of \cite{CSW13}. 
Assume that $g(S) > vg(\ell)$. Then a theorem of G. Kuperberg \cite{Ku03} implies that after Reidemeister moves $S \setminus |D|$ contains an essential nonseparating simple closed curve $C$ upon which we can perform surgey to reduce the genus of $S$. If we orient $C$ and extend its homology class to a symplectic basis for $H(S; \R)$, then this basis element will not appear among the coefficients of $\De_G$. Contrapositively, if the coefficients of $\De_G$ span $H_1(S; \R)$, then $g(S) = vg(\ell)$. We make this more precise in the following. The same conclusion holds for the dual graph $G^*$.

\begin{definition} The \textit{symplectic rank} $rk_s(\De_G)$ is the rank of the submodule of $H_1(S; \R)$ generated by the summands of $\De_G$. \end{definition}

\begin{theorem} \label{srank} Assume that $G \subset S$ is a signed embedded graph associated to a checkerboard colorable link $\ell \subset S \times [0,1]$, and $G^*$ is the dual graph.  If either $rk_s(\De_G)$ or $rk_s(\De_{G^*})$ is equal to twice the genus of $S$, then $g (S) = vg (\ell)$. \end{theorem}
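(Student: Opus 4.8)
The plan is to prove the contrapositive, following the strategy already sketched in the paragraph preceding the statement. Suppose $g(S) > vg(\ell)$. We want to conclude that both $rk_s(\De_G)$ and $rk_s(\De_{G^*})$ are strictly less than $2g(S)$. First I would invoke Kuperberg's theorem \cite{Ku03}: since $\ell$ has a representative in a surface of genus $vg(\ell) < g(S)$, after a sequence of Reidemeister moves the complement $S \setminus |D|$ contains an essential nonseparating simple closed curve $C$ along which surgery strictly lowers the genus. By Proposition \ref{cellular} we may keep all intermediate diagrams cellular (first making $D$ cellular by type II moves, which does not change $g(S)$), so that Proposition \ref{module} applies and $\L_G$, hence the normalized polynomial $\De_G$, is carried along in a controlled way; in particular $rk_s(\De_G)$ is unchanged under the moves, since the Laplacian module — and with it the set of monomials occurring in $\De_G$ — is a Reidemeister-move invariant (up to the $G \leftrightarrow G^*$ ambiguity, which is exactly why the statement allows either graph). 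The same remarks apply verbatim to $G^*$, whose diagram is the dual shading of the same $D$.

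Next I would make precise the claim that the homology class of $C$ does not appear among the summands of $\De_G$. Orient $C$ and complete $[C] \in H_1(S;\Z)$ to a symplectic basis $\B' = \{a_1,b_1,\dots,a_g,b_g\}$ with, say, $a_1 = [C]$. Changing symplectic basis only changes $\De_G$ by the induced monomial substitution (Remark \ref{dependence}.1), so it suffices to argue with respect to $\B'$. Because $G$ is embedded in $S$ disjointly from $C$, every edge connection $\phi_e$ is read off by counting transverse intersections of edges of $G$ with the basis curves; since $G \cap C = \emptyset$, no edge crosses the curve dual to $b_1$ in the sense of Figure \ref{path} that would record a power of the variable $a_1$. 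Hence every $\phi_e$, and therefore every cycle connection $\phi$ appearing in Forman's formula (\ref{formaneq}), is a monomial in $a_2^{\pm1},b_1^{\pm1},b_2^{\pm1},\dots$ — i.e. it does not involve the variable $a_1$ at all. Consequently every summand of $\De_G$ (expanded via (\ref{formaneq}), which is the normal form we fixed) lies in the $\La$-submodule generated by monomials avoiding $a_1$; pushed into $H_1(S;\R)$, the summands lie in the span of $\{a_2,b_1,b_2,\dots,a_g,b_g\}$, a subspace of dimension $2g(S)-1$. Therefore $rk_s(\De_G) \le 2g(S)-1 < 2g(S)$, and identically $rk_s(\De_{G^*}) < 2g(S)$. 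This proves the contrapositive, hence the theorem.

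I expect the main obstacle to be the bookkeeping in the first paragraph: Kuperberg's theorem produces the curve $C$ only after Reidemeister moves, so one must be careful that (i) these moves can be taken through cellular diagrams (handled by Proposition \ref{cellular}), (ii) under such moves $\De_G$ changes only up to sign and possibly swaps with $\De_{G^*}$, so that the hypothesis "$rk_s(\De_G)$ or $rk_s(\De_{G^*})$ equals $2g(S)$" is move-invariant, and (iii) the symplectic rank is genuinely a function of the unordered pair $\{\De_G,\De_{G^*}\}$ and not of an arbitrary presentation. Point (ii) is the subtle one, and it is exactly why the theorem is phrased as a disjunction over the two dual graphs. The geometric heart of the argument — that an embedded graph disjoint from $C$ cannot "see" the Poincaré-dual variable of $[C]$ — is essentially immediate from the definition of the connections in Figure \ref{path}, so I would not expect difficulty there beyond stating it carefully.
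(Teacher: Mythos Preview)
Your proposal follows the same contrapositive strategy that the paper sketches in the paragraph immediately preceding the theorem: invoke Kuperberg to produce an essential nonseparating curve $C$ disjoint from a Reidemeister-equivalent diagram, extend $[C]$ to a symplectic basis, and observe that one basis variable is absent from the connections. So the route is the intended one.

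There is, however, a genuine tension in your bookkeeping. You invoke Proposition~\ref{cellular} to keep \emph{all} diagrams cellular throughout, and then assert that the terminal diagram $D'$ has an essential nonseparating curve $C$ in $S\setminus |D'|$. These two conditions are incompatible: if $D'$ is cellular then every region is a disk, so any simple closed curve in $S\setminus |D'|$ lies in a disk and is contractible. Kuperberg's curve can only appear once you allow a non-cellular diagram at the end of the sequence. The fix is simply to drop the appeal to Proposition~\ref{cellular} here; Proposition~\ref{module} is stated for signed graphs ``not necessarily embedded'' and its proof is purely local, so the Reidemeister graph moves (and hence invariance of $\L_G$ and $\De_G$) go through without cellularity. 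The paper's own sketch sidesteps this by not insisting on cellularity at the terminal step.

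A smaller point: with $a_1=[C]$ and the convention of Figure~\ref{path}, crossing the curve labeled $a_1$ records a power of $b_1$, not $a_1$. So the missing variable is $b_1$, and the summands lie in the span of $\{a_1,a_2,b_2,\dots,a_g,b_g\}$ rather than the span you wrote. (The paper's sketch is equally loose on this.) Either way the span has dimension $2g-1$, so your conclusion $rk_s(\De_G)\le 2g-1$ is unaffected.
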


\begin{example} Consider the virtual link $\ell_{k,l,m}$ in Figure \ref{three}, where $k,l,m$ denote the number of positive or negative twists at the indicated site. The link $\ell_{1,1,1}$ corresponds to the graph in Figure \ref{theta}. The graph of the $\ell_{k,l,m}$ is obtained from $G$ by subdividing edges and appending signs $-1$ if the associated integer is negative. The dual graph $\G^*$ can be obtained from the graph of Figure \ref{thetadual} by replacing edges with multiple edges and appropriate signs. Hence
$$\De_{G^*} = 2(k+l+m) - k(x+x^{-1}) - l(y+y^{-1}) - m(x^{-1}y+xy^{-1}).$$
By Theorem \ref{srank}, the virtual genus of $\ell_{k,l,m}$ is equal to 1.  \end{example}

\begin{figure}
\begin{center}
\includegraphics[height=2 in]{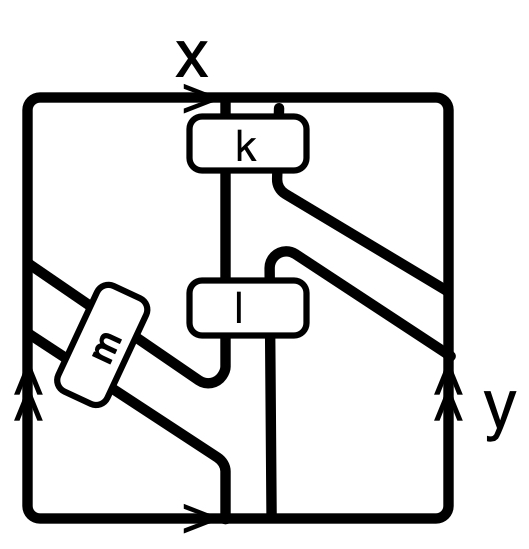}
\caption{Virtual links $\ell_{k,l,m}$}
\label{family}
\end{center}
\end{figure}

\begin{example} If a  diagram $\hat D \subset S$ of a virtual link $\hat \ell$ is not checkerboard colorable, then information about virtual genus of $\hat \ell$ might be obtained by considering a ``doubled diagram" $D$ that describes a satellite link $\ell$ with companion $\hat \ell$.   By \cite{SW13} the virtual genus of $\ell$ is the same as that of $\hat \ell$. 

Figure \ref{satellite} (a) is a diagram of a ``virtual trefoil." Figure \ref{satellite} (b) is a diagram $D$ of a satellite knot $\ell$. The reader can check that the Laplacian polynomial $\De_G$ of the signed graph $G$ associated to $D$ is $4-(x+x^{-1})-3(y+y^{-1})+2(x y^{-1}+x^{-1}y)$. Since the symplectic rank $rk_s(\De_G)$ is 2, Theorem \ref{srank} implies the known result that the virtual trefoil has genus 1. \end{example}

\begin{figure}
\begin{center}
\includegraphics[height=2 in]{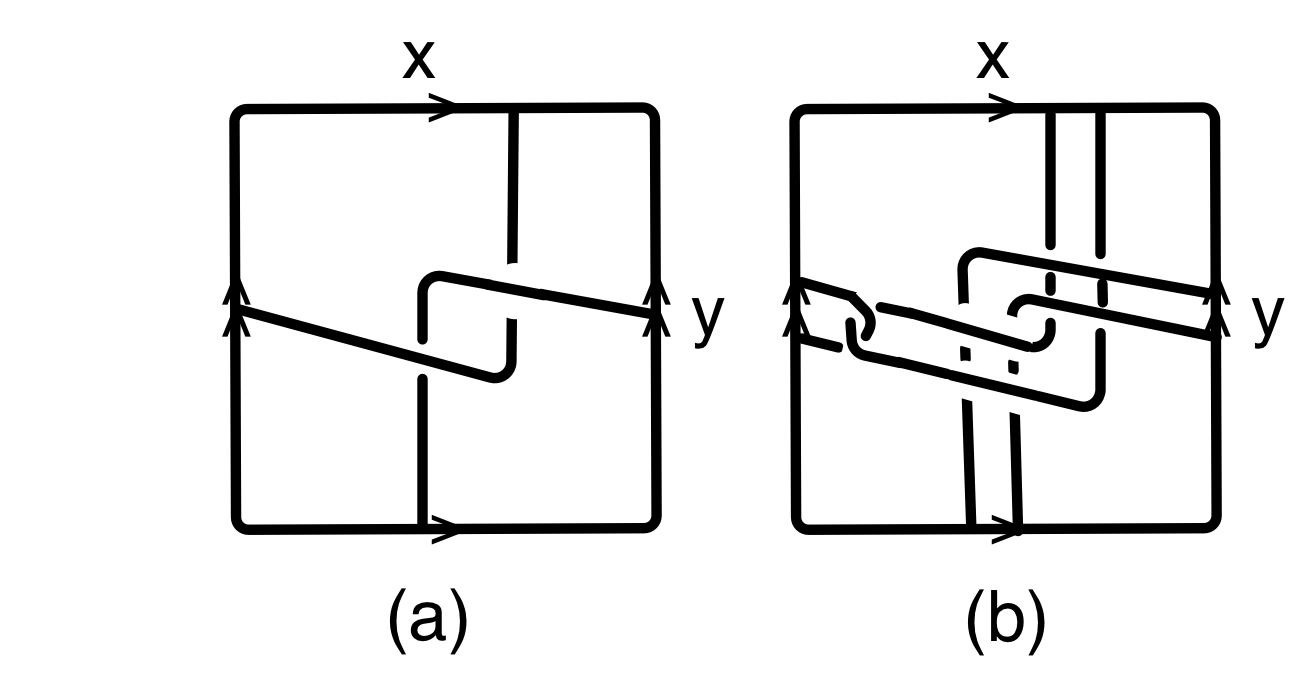}
\caption{Virtual trefoil and satellite}
\label{satellite}
\end{center}
\end{figure}

\begin{example} Consider the 3-component virtual link $\ell$ described by the graph $G$ in Figure \ref{challenge}(a). It is easy to see that $\De_G = 0$. Hence Theorem \ref{srank} gives no information about $vg(\ell)$ in this case. The link can be drawn as in Figure \ref{challenge} (b). 

We give  a direct proof that $vg(\ell)=2$. 
Lift the link diagram to the universal (abelian) cover $\tilde S$. There we see countably many unknots in the plane. If $\ell$ has genus 0, then, after Reidemeister moves, we can find an essential simple closed curve in the torus $S$ that misses the diagram. Perform the Reidemeister moves equivariantly in $\tilde S$ and consider a single component of the lifted curve. The component separates the plane. However, any two circles of the lifted diagram $\tilde D$ are part of a chain of consecutively-linked circles. Hence $\tilde D$ lies on only one side of the separating curve, which is not possible. Hence $vg(\ell)=1$. 

\begin{figure}
\begin{center}
\includegraphics[height=2 in]{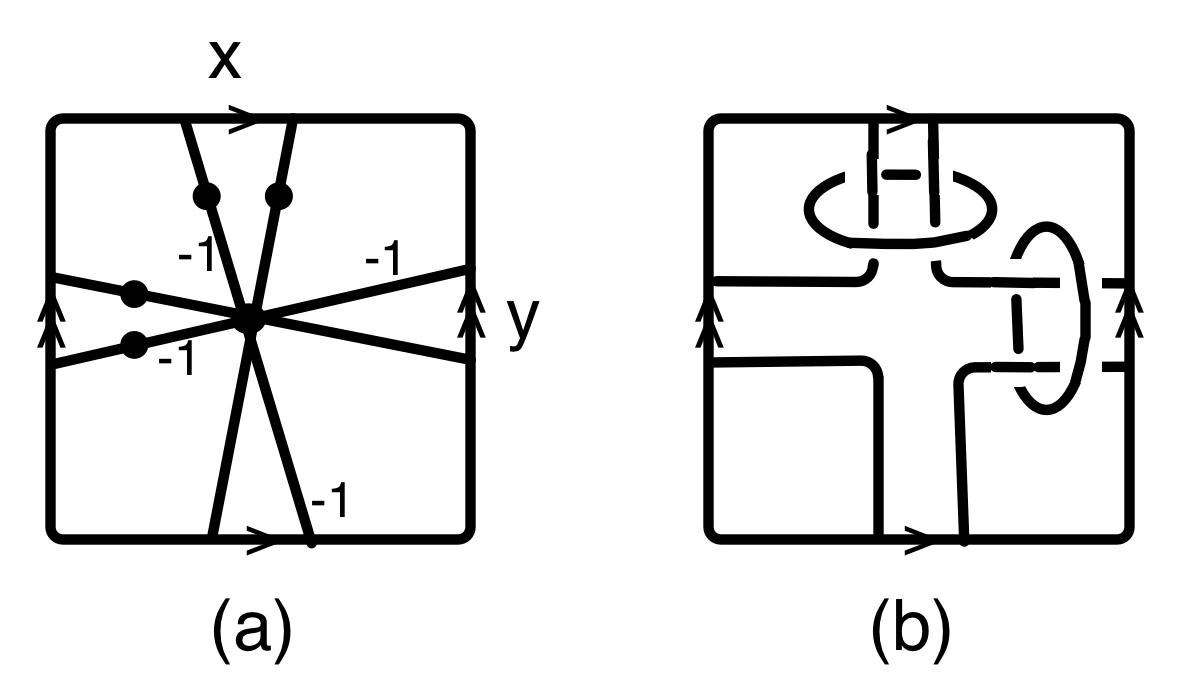}
\caption{Graph $G$ and associated virtual link $\ell$}
\label{challenge}
\end{center}
\end{figure}

\end{example}

\bigskip

\ni Department of Mathematics and Statistics,\\
\ni University of South Alabama\\ Mobile, AL 36688 USA\\
\ni Email: \\
\ni  silver@southalabama.edu\\
\ni swilliam@southalabama.edu

\end{document}